\newtheorem{theorem}{Theorem}[section]
\newtheorem{proposition}{Proposition}[section]
\newtheorem{definition}{Definition}[section]
\newtheorem{remarkth}[definition]{Remark}
\newenvironment{remark}{\begin{remarkth}\upshape}{\end{remarkth}}
\newcommand{\X}{\mathcal{X}}
\title{\LARGE \bf
The Bouncing Penny and Nonholonomic Impacts
}
\author{William Clark$^{1}$,  Anthony Bloch$^{1}$
\thanks{$^{1}$W. Clark, and A. Bloch are with the Department of Mathematics, University of Michigan, 530 Church St. Ann Arbor, 48109, Michigan, USA. Research
supported in part by the NSF and AFOSR.
        {\tt\small wiclark@umich.edu, abloch@umich.edu}}%
}
\begin{document}

\maketitle
\thispagestyle{empty}
\pagestyle{empty}

%%%%%%%%%%%%%%%%%%%%%%%%%%%%%%%%%%%%%%%%%%%%%%%%%%%%%%%%%%%%%%%%%%%%%%%%%%%%%%%%
\begin{abstract}
The evolution of a Lagrangian mechanical system is variational. Likewise, when dealing with a hybrid Lagrangian system (a system with discontinuous impacts), the impacts can also be described by variations. These variational impacts are given by the so-called Weierstrass-Erdmann corner conditions. Therefore, hybrid Lagrangian systems can be completely understood by variational principles. 

Unlike typical (unconstrained / holonomic) Lagrangian systems, nonholonomically constrained Lagrangian systems \textit{are not} variational. However, by using the Lagrange-d'Alembert principle, nonholonomic systems can be described as projections of variational systems. This paper works out the analogous version of the Weierstrass-Erdmann corner conditions for nonholonomic systems and examines the billiard problem with a rolling disk.
\end{abstract}

%%%%%%%%%%%%%%%%%%%%%%%%%%%%%%%%%%%%%%%%%%%%%%%%%%%%%%%%%%%%%%%%%%%%%%%%%%%%%%%%
\section{Introduction}
Mechanical systems exhibiting unilateral constraints have a rich history and are used to model many interesting and important systems, see \cite{brogliato1999nonsmooth} and the over 1000 references therein. A particular and well-studied example of these systems is the billiard problem. This is characterized by a particle on a table moving in straight lines and reflecting off of the boundary by specular reflection. Some examples where this is studied is \cite{Baryshnikov}, \cite{doi:10.1119/1.1738428}, \cite{FERES20041541}, \cite{artin1924mechanisches}, \cite{chernov2003introduction} and \S9.2 of \cite{katok1995introduction}. Moreover, this problem has even been applied to biological processes \cite{SPAGNOLIE201733}.

Contrary to the mathematical billiard problem, billiard balls are spherical and are capable of much more complex impacts than those following specular reflection. Cases where the geometry of the ball are taken into consideration can be found in \cite{coxbilliards} and \cite{BROOMHEAD1993188}. Even though these study impacts with spheres, the spheres are unconstrained before and after impact. This misses the nonholonomic constraint imposed by the ball rolling without slipping. Therefore, in order to fully encapsulate the behavior of billiards, both the geometry of the ball and the nonholonomic constraint need to be taken into account.

This paper explores and develops a framework for working out impacts of mechanical systems subjected to nonholonomic constraints. Nonholonomic impacts as described here fall into two categories: elastic and plastic. Elastic impacts are taken to be variational and thus follow from a direct application of the Weierstrass-Erdmann corner conditions (see \S 4.4 of \cite{kirk1970optimal} or \S 3.5 of \cite{brogliato1999nonsmooth}). On the other hand, plastic impacts are taken to be orthogonal (with respect to the kinetic energy metric) projections onto the constraint distribution.

To showcase the utility of nonholonomic impacts, this paper concludes by studying the billiard problem of the vertical rolling disk (henceforth referred to as a penny). This consists of the rolling penny (which possesses two nonholonomic constraints) freely moving on a table until it bounces off of the table edge. Numerical simulations compare plastic and elastic impacts for a penny on an elliptical table as well as hint at chaotic properties of the elastic impact.

This paper is laid out as follows: Section \ref{sec:prelim} outlines some preliminary notions of hybrid systems and geometric mechanics. Section \ref{sec:holonomic} reviews how to compute (variational) impacts for unconstrained Lagrangian systems and Section \ref{sec:nonholon} extends this to nonholonomic systems. Sections \ref{sec:penny} and \ref{sec:numerical} contain the derivation of the motion for the rolling penny billiard problem as well as some numerical simulations. Finally, section \ref{sec:future} contains possible future research directions.

%%%%%%%%%%%%%%%%%%%%%%%%%%%%%%%%%%%%%%%%
\section{Preliminaries}\label{sec:prelim}
In what follows, we will define and study the notion of a \textit{nonholonomic hybrid dynamical system}. This will be done by joining the theory of hybrid dynamical systems and geometric mechanics. This section reviews some preliminary notions from these fields.
%%%%%%%%%%%%%%%%%%%%%%%%%%%%%%%%%%%%%%%%
\subsection{Hybrid Systems}
Informally speaking, a hybrid systems is something that exhibits both continuous and discrete dynamics, see \cite{teel} for an extensive treatment. For our purposes, we define a hybrid dynamical system as well as the notion of an execution as follows, similar to \cite{1656623}.
\begin{definition}
	The 4-tuple, $(\X,S,f,\delta)$, is a hybrid dynamical system if:
	\begin{enumerate}
		\item $\X$ is a smooth manifold called the state-space,
		\item $S\subset\X$ is a codimension 1 embedded manifold called the impact surface,
		\item $f:\X\to T\X$ is a smooth vector field, and
		\item $\delta:S\to \X$ is a smooth map called the impact map.
	\end{enumerate}
\end{definition}
\begin{definition}
	An execution of $(\X,S,f,\delta)$ is a tuple $(\Lambda,J,\mathcal{C})$ such that
	\begin{enumerate}
		\item $\Lambda = \left\{ 0,1,2,\ldots\right\}\subset\mathbb{N}$ is a finite or infinite indexing set,
		\item $J = \left\{ I_i\right\}_{i\in\Lambda}$ is a collection of intervals where if $i,i+1\in\Lambda$ then $I_i = [\tau_i,\tau_{i+1}]$ and if $|\Lambda|=N<\infty$ then $I_{N-1} = [\tau_{N-1},\tau_N]$ or $[\tau_{N-1},\tau_N)$ or $[\tau_{N-1},\infty)$,
		\item $\mathcal{C} = \left\{c_i\right\}_{i\in\Lambda}$ is a family of functions, $c_i:I_i\to \X$ such that the following three conditions hold: (i) $\dot{c}_i(t) = f(c_i(t))$, (ii) $c_i(\tau_{i+1})\in S$, and (iii) $\delta(c_i(\tau_{i+1})) = c_{i+1}(\tau_{i+1})$.
	\end{enumerate}
\end{definition}
\begin{remark}
	Through out the remainder of this paper, an execution $(\Lambda,J,\mathcal{C})$ will simply be referred to as $c:\cup I_i\to \X$ where $c(t) = c_i(t)$ when $t\in I_i$. The break points will be denoted as $c^+(\tau_{i+1}) = c_{i+1}(\tau_{i+1})$ and $c^-(\tau_{i+1}) = c_i(\tau_{i+1})$.
\end{remark}
\begin{remark}
	A common issue when studying hybrid systems is the notion of a Zeno solution. This happens when $|\Lambda|=\infty$ but $\lim_{i\to\infty} \tau_i < \infty$.  However, we will not be concerned about this behavior here because the purpose of this paper is to derive the impact map, $\delta$, for nonholonomic systems. See \cite{1656623} for more on Zeno solutions.
\end{remark}
%%%%%%%%%%%%%%%%%%%%%%%%%%%%%%%%%%%%%%%%
\subsection{Geometric Mechanics}
Geometric mechanics is generally given in the Lagrangian or Hamiltonian formalism. Here we will only be interested in the Lagrangian picture but everything transfers to the Hamiltonian side assuming mild conditions.

Lagrangian mechanics consists of two pieces of information, a smooth manifold $Q$ called the configuration space and a smooth function $L:TQ\to\mathbb{R}$ called the Lagrangian function. By imposing additional assumptions on the Lagrangian, $L$, we introduce various classes of Lagrangians (see \cite{abraham2008foundations} and \cite{arnoldmechanics} for background). Before we can define these classes, we first need the notion of the fiber derivative.
\begin{definition}
	Let $Q$ be a manifold and $L\in C^\infty(TQ)$. Then the map $\textbf{F}L:TQ\to T^*Q$ with $v_q\mapsto DL_q(v_q)\in T_q^*Q$ is called the fiber derivative. Here, $L_q$ denotes the restriction of $L$ to the fiber over $q\in Q$.
\end{definition}
We now introduce three classes of Lagrangians.
\begin{definition}
	The Lagrangian is called:
	\begin{enumerate}
		\item regular if $\textbf{F}L$ is nondegenerate at each $q\in Q$,
		\item hyperregular if $\textbf{F}L:TQ\to T^*Q$ is a global diffeomorphism, and
		\item natural if $L(q,v) = 1/2 g_q(v,v)-V(q)$ where $(Q,g)$ forms a Riemannian manifold, i.e. $L$ is of the form kinetic energy minus potential energy.
	\end{enumerate}
\end{definition}
Notice that natural implies hyperregular which implies regular. Through out the rest of this work, all Lagrangians will be assumed to be either hyperregular or natural. When the Lagrangian is natural, the fiber derivative agrees with the musical isomorphisms, $\sharp$ and $\flat$.
\begin{equation}
\begin{split}
\flat:TQ\to T^*Q,&\quad v\mapsto g(v,\cdot),\\
\sharp:T^*Q\to TQ,& \quad \sharp = \flat^{-1}.
\end{split}
\end{equation}

Nonholonomic Lagrangian systems are Lagrangian systems with a constraint distribution, $\Delta$. The constraint distribution, $\Delta\subset TQ$, satisfies: $\Delta_x\subset T_xQ$ is a subspace. A distribution being the same dimension everywhere is called regular. This leads to the definition of a nonholonomic hybrid Lagrangian.

\begin{definition}
	The 4-tuple $(Q,L,\Delta,S)$ is a \textit{nonholonomic hybrid Lagrangian} if
	\begin{enumerate}
		\item $Q$ is a smooth manifold,
		\item $L:TQ\to\mathbb{R}$ is a smooth Lagrangian,
		\item $\Delta \subset TQ$ is a smooth regular distribution, called the constraint distribution, and
		\item $S\subset Q$ is a smooth embedded codimension 1 submanifold, called the impact surface.
	\end{enumerate}
\end{definition}
Notice that a nonholonomic hybrid Lagrangian, $(Q,L,\Delta,S)$, can be made into a hybrid dynamical system with $\X=\Delta$, $S=S$, $f$ is the equations of motion for the nonholonomic Lagrangian, and $\delta$ is the object of study in Section \ref{sec:nonholon}.
%%%%%%%%%%%%%%%%%%%%%%%%%%%%%%%%%%%%%%%%%%%%%%%%%%%%%%%%%%%%%%%%%%%%%%%%%%%%%%%
\section{Holonomic Systems}\label{sec:holonomic}
In this section, we restrict ourselves to the special case where $\Delta = TQ$. This class of systems is almost the same as studied in \cite{lamperski2008sufficient} and \cite{1656623}. The difference is that there the impact surface is given by the level set of a smooth function $h:Q\to\mathbb{R}$. However, since $S$ is embedded, we can \textit{locally} describe $S$ by the level set of a function. All of the following computations will be done in local coordinates, so this will never be an issue. For the remainder of this paper, we will view $S$ as locally the level set of $h:Q\to\mathbb{R}$.

Due to the fact that we are studying a hybrid system, we need to define both the continuous and impact dynamics for $(Q,L,S)$ ($\Delta$ is suppressed here because it is taken to be $TQ$ and offers no new information). The continuous dynamics are governed by the Euler-Lagrange equations:
\begin{equation}\label{eq:contL}
\frac{d}{dt}\frac{\partial L}{\partial \dot{q}^i} - \frac{\partial L}{\partial q^i} = 0.
\end{equation}
The Euler-Lagrange equations are \textit{variational}. With this insight, we define the impact map to be variational as well. This is realized by the Weierstrass-Erdmann corner conditions (see \S 4.4 of \cite{kirk1970optimal} or \S 3.5 of \cite{brogliato1999nonsmooth}):
\begin{equation}\label{eq:WEL}
\begin{split}
\textbf{F}L^+ - \textbf{F}L^- &= \alpha \cdot dh,\\
L^+ - \langle \textbf{F}L^+,\dot{q}^+\rangle &= L^- - \langle \textbf{F}L^-,\dot{q}^-\rangle,
\end{split}
\end{equation}
where $S = \left\{ q\in Q : h(q) = 0\right\}$ and the multiplier $\alpha$ is set so both equations are satisfied. These corner conditions have a clearer interpretation in the Hamiltonian setting:
\begin{equation}\label{eq:pH}
\begin{split}
p^+ &= p^- + \alpha\cdot dh, \\
H^+ &= H^-.
\end{split}
\end{equation}
i.e. energy at impacts is conserved and the change in momentum is perpendicular to the impact surface which is precisely specular reflection. These equations can be explicitly solved when the system is natural. Recall that $\nabla h = dh^\sharp$ and $p = \textbf{F}L(\dot{q}) = \dot{q}^\flat$.
\begin{theorem}
	Assuming that $L(q,v) = 1/2g_q(v,v)-V(q)$ is natural, the impact map is given by $(q^-,\dot{q}^-) \mapsto (q^-,P(q^-,\dot{q}^-))$ where
	\begin{equation}\label{eq:impact}
	P(q,\dot{q}) = \dot{q} - 2 \frac{dh(\dot{q})}{g(\nabla h,\nabla h)} \nabla h.
	\end{equation}
\end{theorem}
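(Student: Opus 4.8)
The plan is to run the Hamiltonian form of the Weierstrass--Erdmann corner conditions, equation \eqref{eq:pH}, through the two simplifications that a natural Lagrangian provides: the fiber derivative is the musical isomorphism $\flat$, so momenta and velocities are freely interchangeable, and the Hamiltonian is the total energy $H(q,v) = \tfrac12 g_q(v,v) + V(q)$. Since an impact is instantaneous the configuration does not jump, $q^+ = q^-$, so the potential cancels and $H^+ = H^-$ collapses to conservation of kinetic energy, $g_{q^-}(\dot q^+,\dot q^+) = g_{q^-}(\dot q^-,\dot q^-)$.

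Next I would rewrite the momentum jump. Applying $\sharp$ to $p^+ = p^- + \alpha\,dh$ and using $p = \dot q^\flat$ together with $\nabla h = dh^\sharp$ gives $\dot q^+ = \dot q^- + \alpha\,\nabla h$ for some scalar $\alpha$, which is already the claimed form of $P$ up to pinning down $\alpha$. Substituting this into the energy identity and expanding the bilinear form yields $2\alpha\,g(\dot q^-,\nabla h) + \alpha^2\,g(\nabla h,\nabla h) = 0$; the elementary identity $g(\dot q^-,\nabla h) = g(\dot q^-, dh^\sharp) = dh(\dot q^-)$ turns this into $\alpha\bigl(2\,dh(\dot q^-) + \alpha\,g(\nabla h,\nabla h)\bigr) = 0$. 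Because $S$ is an embedded codimension-one submanifold, the local defining function may be taken with $dh \ne 0$ on $S$, so $g(\nabla h,\nabla h) \ne 0$ and there are exactly two roots, $\alpha = 0$ and $\alpha = -2\,dh(\dot q^-)/g(\nabla h,\nabla h)$.

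Finally I would argue that $\alpha = 0$ is the spurious branch --- it corresponds to the trajectory crossing $S$ without reflecting --- and keep the nontrivial root, which when fed back into $\dot q^+ = \dot q^- + \alpha\,\nabla h$ produces exactly \eqref{eq:impact}. I expect the only real subtlety to be this branch selection: one should justify that the reflecting root is the correct solution of the corner conditions, for instance by observing that it is the unique choice with $\dot q^+ \ne \dot q^-$ whenever the incoming motion is transverse to $S$, and one may additionally verify the sanity check $dh(\dot q^+) = -dh(\dot q^-)$, confirming that the normal component of the velocity is genuinely flipped. The remaining content is the routine bilinear algebra above.
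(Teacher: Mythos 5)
Your proof is correct and follows exactly the route the paper intends: the paper states this theorem without a written proof, remarking only that the corner conditions \eqref{eq:pH} ``can be explicitly solved when the system is natural'' via $\nabla h = dh^\sharp$ and $p=\dot q^\flat$, which is precisely your computation, and it mirrors the paper's own proof of Proposition \ref{prop:uniq} (the quadratic in $\alpha$ with the trivial root discarded). Your branch-selection discussion and the check $dh(\dot q^+)=-dh(\dot q^-)$ are sound additions.
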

Notice that \eqref{eq:impact} matches the impact equation in \cite{1656623} with the exception of a coefficient of restitution $0\leq e \leq 1$. That is,
\begin{equation}\label{eq:restit}
\dot{q}^+ = \dot{q}^- - (1+e)\frac{dh(\dot{q}^-)}{g(\nabla h,\nabla h)} \nabla h.
\end{equation}
A hybrid Lagrangian systems is completely described by \eqref{eq:contL} and \eqref{eq:impact}.
\begin{remark}
	In equation \eqref{eq:restit}, the case when $e=1$ reduces to the variational case (elastic). When $e=0$, $P$ becomes the orthogonal projection (with respect to $g$) $TQ\to\ker dh$ (plastic).
\end{remark}
%%%%%%%%%%%%%%%%%%%%%%%%%%%%%%%%%%%%%%%%%%%%%%%%%%%%%%%%
\section{Nonholonomic Systems}\label{sec:nonholon}
We now consider the general case of $(Q,L,\Delta,S)$ where $\Delta \ne TQ$. The continuous and impact dynamics need to be modified in the presence of these constraints. For this section, Einstein summation notation will be used: when an expression has a matching lower and upper index, a sum is implied, i.e. $a_kb^k = \sum_k\, a_kb^k$.
\subsection{Continuous}
We make a brief review of nonholonomic equations of motions. For a more in-depth treatment see \cite{bloch2008nonholonomic}.

Let $\Delta\subset TQ$ be a regular smooth distribution. This can be realized as the kernel of a collection of linearly independent 1-forms:
\begin{equation}\label{eq:ker}
\Delta_q = \bigcap_{k=1}^m \, \ker\omega^k_q.
\end{equation}
Then the collection of 1-forms, $\{\omega^k\}$, is called the constraining 1-forms. In local coordinates we can write each constraining 1-form as
\begin{equation}
\omega^k(\dot{q}) = a^k_j(q)\dot{q}^j = 0,\quad k=1,\ldots,m.
\end{equation}
With this, we can determine the nonholonomic equations of motion by the Lagrange-d'Alembert principle. In short, this principle states that in order to impose the constraints to the Lagrangian system, the constraining forces do no work. This is formally defined below.
\begin{definition}\label{def:LD}
	The principle
	\begin{equation}
	\delta \int_a^b \, L(q(t),\dot{q}(t))\, dt = 0,
	\end{equation}
	where the virtual displacements $\delta q$ are assumed to satisfy the constraints:
	\begin{equation}
	a_j^k(q)\delta q^j = 0, \quad k = 1,\ldots, m,
	\end{equation}
	is called the \textit{Lagrange-d'Alembert principle}.
\end{definition}
The equations of motion generated by definition \ref{def:LD} will henceforth be referred to as the \textit{dynamic nonholonomic equations of motion}.
\begin{remark}
  Notice that the dynamic nonholonomic equations are different from  variational nonholonomic equations or so-called vakonomic systems. See \cite{arnoldIII}
  and \cite{bloch2008nonholonomic}
  for a comparison between these formulations.  However, the dynamic nonholonomic equations give the \textit{correct} mechanical equations of motion. 
\end{remark}

We can use the Lagrange-d'Alembert principle to write down the dynamic nonholonomic equations of motion (which will look like a modified version of the Euler-Lagrange equations). Using the fact that the $m$ 1-forms are linearly independent, we can choose local coordinates such the forms have the form
\begin{equation}
\omega^a_q = ds^a + A_\alpha^a(r,s)dr^\alpha,\quad a = 1,\ldots,m,
\end{equation}
where $q = (r,s)\in \mathbb{R}^{n-m}\times\mathbb{R}^m$ are local coordinates. In these coordinates, the equations of motion can be expressed as $n-m$ second-order differential equations and $m$ first-order constraint equations.
\begin{equation}\label{eq:non_2nd}
\begin{split}
\left( \frac{d}{dt}\frac{\partial L}{\partial \dot{r}^\alpha} - \frac{\partial L}{\partial r^\alpha} \right) &= A_\alpha^a \left( \frac{d}{dt} \frac{\partial L}{\partial \dot{s}^a} - \frac{\partial L}{\partial s^a} \right),\\
& \alpha = 1,\ldots, n-m,
\end{split}
\end{equation}
\begin{equation}\label{eq:non:1st}
\dot{s}^a = -A_\alpha^a \dot{r}^\alpha,\quad a = 1,\ldots,m.
\end{equation}
%%%%%%%%%%%%%%%%%%%%%%%%%%%%%%%%%%%%%%%%%%%%%%%%%%%%%%%%%%%%%%%%%%%%%%%%%
\subsection{Impacts}
The study of constrained impacts breaks into two categories: elastic and plastic. Elastic impacts arise variationally while plastic impacts result from a projection, essentially the $e=1$ and $e=0$ cases in equation \eqref{eq:restit}.
%%%
\subsubsection{Elastic Impacts}
Consider the variational impact equations:
\begin{equation}
\begin{split}
\left( \textbf{F}L^+ - \textbf{F}L^- \right) \delta q &= 0, \\
\left(H^+ - H^- \right) \delta t &= 0.
\end{split}
\end{equation}
If the impact time is free, i.e. $\delta t \ne 0$, then this leads to conservation of energy of the impacts which is the second half of equation \eqref{eq:WEL}. The spatial variations, $\delta q$, have to satisfy two sets of constraints: the nonholonomic constraints $\omega^k(\delta q) = 0$, and the impact constraint $dh(\delta q) = 0$. This leads to the following system of equations,
\begin{equation}\label{eq:var_impact}
\begin{split}
\Delta \textbf{F}L &= \lambda_k \omega^k + \alpha\cdot dh, \\
\Delta H &= 0,\\
\omega^k(\dot{q}^+) &= 0.
\end{split}
\end{equation}
\begin{proposition}\label{prop:uniq}
	Assuming $L$ is a natural Lagrangian and all $\omega^k$ and $dh$ are linearly independent, there exists at most an unique nontrivial solution to \eqref{eq:var_impact}.
\end{proposition}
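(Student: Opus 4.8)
The plan is to collapse the system \eqref{eq:var_impact} to a single scalar quadratic whose nonzero root is forced, so that the velocity after impact (and hence the multipliers) is determined. First I would use naturalness: $\mathbf{F}L$ coincides with $\flat$, the energy function equals $H(q,v)=\tfrac12 g_q(v,v)+V(q)$, and the configuration does not jump, so I work at the common base point $q=q^-=q^+$ and write $v^\pm=\dot q^\pm$. Applying $\sharp$ to the first line of \eqref{eq:var_impact} gives $v^+-v^- = \lambda_k(\omega^k)^\sharp + \alpha\,(dh)^\sharp \in W$, where $W:=\operatorname{span}\{(\omega^1)^\sharp,\dots,(\omega^m)^\sharp,(dh)^\sharp\}$. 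Since $\sharp$ is an isomorphism and $\{\omega^1,\dots,\omega^m,dh\}$ are linearly independent, $\dim W = m+1$ and, equally, the multipliers $(\lambda_k,\alpha)$ are uniquely recovered from the vector $u:=v^+-v^-$; thus it suffices to pin down $u$. The third line of \eqref{eq:var_impact} says $v^+\in\Delta$, and since $\dot q^-$ is the pre-impact velocity of a nonholonomic trajectory it satisfies $\omega^k(\dot q^-)=0$, i.e. $v^-\in\Delta$; hence $u\in W\cap\Delta$.

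Next I would identify $W\cap\Delta$. Writing $\Delta^\perp$ for the $g$-orthogonal complement of $\Delta$, one has $\Delta^\perp=\operatorname{span}\{(\omega^k)^\sharp\}$, so $W=\Delta^\perp+\mathbb{R}\,(dh)^\sharp$, and linear independence of $\{\omega^k,dh\}$ forces $(dh)^\sharp\notin\Delta^\perp$ (otherwise $dh=\flat$ of a combination of the $(\omega^k)^\sharp$ would be a dependence). Decomposing $(dh)^\sharp = n+n^\perp$ with $0\ne n\in\Delta$ and $n^\perp\in\Delta^\perp$, a short computation — using that positive-definiteness of $g$ gives $\Delta\cap\Delta^\perp=0$ — shows $W\cap\Delta=\mathbb{R}\,n$, with $n=P_\Delta(dh)^\sharp$ the orthogonal projection. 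Hence $u=t\,n$ for a scalar $t$, and substituting $v^+=v^-+t\,n$ into the energy equation $g(v^+,v^+)=g(v^-,v^-)$ yields $2t\,g(v^-,n)+t^2 g(n,n)=0$. Because $g(n,n)>0$, the only nonzero solution is $t=-2g(v^-,n)/g(n,n)$, so there is at most one $u\ne 0$, hence at most one nontrivial $v^+$, and then $(\lambda_k,\alpha)$ are determined as above.

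I expect the main obstacle to be the identification of $W\cap\Delta$: one must check that the linear-independence hypothesis is exactly what excludes the degenerate case $(dh)^\sharp\in\Delta^\perp$ (in which the sole solution is the trivial one) and that positive-definiteness of the kinetic metric is what forces the intersection to be a line rather than higher-dimensional. A secondary subtlety is that to conclude $u\in\Delta$ one genuinely needs $v^-\in\Delta$ from the pre-impact nonholonomic dynamics, not merely $v^+\in\Delta$ from \eqref{eq:var_impact} as written. Everything else is routine linear algebra, and the argument simultaneously exhibits the unique nontrivial impact, $v^+ = v^- - \tfrac{2g(v^-,n)}{g(n,n)}\,n$ with $n=P_\Delta(dh)^\sharp$, which is the nonholonomic analogue of \eqref{eq:impact}.
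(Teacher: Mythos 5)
Your proof is correct, and it takes a recognizably different route from the paper's, even though both arguments bottom out in the same observation: a quadratic that admits $0$ as a root has at most one other root. The paper stays in $T^*Q$ with all $m+1$ multipliers, uses the $m$ linear equations \eqref{eq:linear} (the Gram matrix of the $\omega^k$) to solve for $\lambda_k$ as linear functions of $\alpha$, and substitutes into the energy relation to get a scalar quadratic in $\alpha$ with zero as a root. You instead pass to $TQ$ via $\sharp$ and observe that the jump $u=v^+-v^-$ must lie in $\bigl(\Delta^\perp+\mathbb{R}\,\nabla h\bigr)\cap\Delta=\mathbb{R}\,P_\Delta\nabla h$, reducing everything to a quadratic in one scalar $t$; your elimination of $\Delta^\perp$ is exactly the paper's elimination of the $\lambda_k$ in disguise (the inverse Gram matrix is the $(a_{ij})$ the paper later uses for the plastic projection). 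What your version buys is (i) a closed-form for the elastic impact, $v^+=v^- - 2\,dh(v^-)\,P_\Delta\nabla h/g(P_\Delta\nabla h,P_\Delta\nabla h)$, i.e.\ \eqref{eq:impact} with $\nabla h$ replaced by its $g$-orthogonal projection onto $\Delta$, which the paper never exhibits; and (ii) an explicit accounting of where the hypotheses enter: linear independence of $\{\omega^k,dh\}$ is precisely $P_\Delta\nabla h\ne 0$, and positive-definiteness of $g$ gives $\Delta\cap\Delta^\perp=0$ and $g(n,n)>0$. You are also right to flag that both arguments silently need $v^-\in\Delta$ (the paper drops the term $\langle p^-,\omega^\ell\rangle$ from \eqref{eq:linear} for exactly this reason); making that hypothesis explicit is a small but genuine improvement.
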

\begin{proof}
  Using the fact that $p^+ = p^- + \lambda_k\omega^k+\alpha\cdot dh$ and
  substituting this into the other constraints, we get $m$ linear equations and one quadratic equation.
	\begin{equation}\label{eq:linear}
	\lambda_k \langle \omega^k,\omega^\ell\rangle + \alpha \langle dh,\omega^\ell \rangle = 0, \quad \forall \ell,
	\end{equation}
	\begin{equation}\label{eq:quad}\begin{split}
	2\lambda_k \langle p^-,\omega^k\rangle &+ 2\alpha\langle p^-,dh\rangle + \lambda_i\lambda_j\langle \omega^i,\omega^j\rangle \\
	&+ 2\lambda_k\alpha\langle \omega^k,dh\rangle + \alpha^2\langle dh,dh\rangle = 0
	\end{split}\end{equation}
	Equation \eqref{eq:linear} can be solved such that $\lambda_k = \lambda_k(\alpha)$ depend linearly on $\alpha$. When this is substituted into \eqref{eq:quad}, the resulting equation is quadratic in $\alpha$ with zero as a solution. Therefore it can have at most one nonzero answer.
\end{proof}
\begin{definition}
	Let $(Q,g)$ be a Riemannian manifold and $(Q,L,\Delta,S)$ be a corresponding natural nonholonomic Lagrangian. The impact map given by Proposition \ref{prop:uniq} is called the \textit{elastic nonholonomic impact map}.
\end{definition}
%%%%%%%%%%%%%%%%%%%%%%%%%%%%%%%%%%%%%%%%%%%%%%%%%%%%%%%%%%%
\subsubsection{Plastic Impacts}
While elastic impacts arise variationaly, plastic impacts come from projections. Given the unconstrained impact, $\dot{q}^+ = P(q,\dot{q})$, it is not generally true that $P(q,\dot{q})\in\Delta$. In order to enforce the constraints on $\dot{q}^+$, we orthogonally project $P$ onto the subspace $\Delta$. This leads to the following definition.
\begin{definition}
	Let $(Q,g)$ be a Riemannian manifold and $(Q,L,\Delta,S)$ be a corresponding natural nonholonomic Lagrangian. Denote $\pi_\Delta:TM\to\Delta$ as the orthogonal (with respect to $g$) projection onto the subspace $\Delta$. Then, the \textit{plastic nonholonomic impact map} is given by
	\begin{equation}
	(q,\dot{q})\mapsto \left(q,\pi_\Delta\circ P(q,\dot{q})\right),
	\end{equation}
	where $P$ is the variational impact in \eqref{eq:impact}.
\end{definition}
\begin{remark}
	Plastic impacts are commonly used for walking robots where the foot is not allowed to bounce off of the ground, see \cite{coleman1997motions} and \cite{garcia1998simplest}.
\end{remark}

When the underlying Lagrangian is natural, we can determine an explicit formula for $\pi_\Delta\circ P$.
\begin{proposition}
	Let $\Delta$ be defined as \eqref{eq:ker} and $W^k = \left(\omega^k\right)^\sharp$ be vector fields corresponding to the constraining 1-forms. Define the matrices
	\begin{equation}
	a^{ij} = \omega^i\left(W^j\right),\quad \left(a_{ij}\right) = \left(a^{ij}\right)^{-1}.
	\end{equation}
	Then the nonholonomic impact map is given by
	\begin{equation}\label{eq:nonhimpact}
	\dot{q} \mapsto \dot{q} - 2\frac{dh(\dot{q})}{g(\nabla h,\nabla h)} \left( 
	\nabla h - a_{ij}\omega^i(\nabla h)W^j \right).
	\end{equation}
\end{proposition}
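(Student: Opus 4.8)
The plan is to reduce the claim to a single fibrewise orthogonal-projection computation and then unwind it. By the theorem of Section~\ref{sec:holonomic}, for a natural Lagrangian the unconstrained impact is $P(q,\dot q)=\dot q - 2\frac{dh(\dot q)}{g(\nabla h,\nabla h)}\nabla h$, so since $\pi_\Delta$ is fibrewise linear,
\[
\pi_\Delta\circ P(q,\dot q)=\pi_\Delta(\dot q)-2\frac{dh(\dot q)}{g(\nabla h,\nabla h)}\,\pi_\Delta(\nabla h).
\]
The first term simplifies at once: the impact map takes as input the pre-impact state $(q,\dot q)\in\Delta$, so $\omega^k(\dot q)=0$ for every $k$ and hence $\pi_\Delta(\dot q)=\dot q$. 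Thus the proposition reduces to showing $\pi_\Delta(\nabla h)=\nabla h - a_{ij}\,\omega^i(\nabla h)\,W^j$.

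First I would identify the orthogonal complement of $\Delta$. Since $W^k=(\omega^k)^\sharp$ means $\omega^k=g(W^k,\cdot)$, a vector $v$ lies in $\Delta=\bigcap_k\ker\omega^k$ iff $g(W^k,v)=0$ for all $k$, i.e. iff $v\perp\mathrm{span}\{W^1,\dots,W^m\}$. Hence $\Delta^\perp=\mathrm{span}\{W^1,\dots,W^m\}$ and $\pi_\Delta=\mathrm{id}-\pi_{\Delta^\perp}$.

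Next I would compute $\pi_{\Delta^\perp}$ explicitly. Writing $\pi_{\Delta^\perp}(v)=c_jW^j$, the condition $v-c_jW^j\perp W^i$ for all $i$ becomes $g(W^i,v)=c_j\,g(W^i,W^j)$, i.e. $\omega^i(v)=a^{ij}c_j$, where $a^{ij}=\omega^i(W^j)=g(W^i,W^j)$ is symmetric. The Gram matrix $(a^{ij})$ is invertible precisely because $g$ is positive definite and the $\omega^k$ (equivalently the $W^k$) are linearly independent; inverting gives $c_j=a_{ij}\,\omega^i(v)$. Therefore $\pi_{\Delta^\perp}(v)=a_{ij}\,\omega^i(v)\,W^j$, and with $v=\nabla h$ we get $\pi_\Delta(\nabla h)=\nabla h - a_{ij}\,\omega^i(\nabla h)\,W^j$. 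Substituting this back into the display above yields \eqref{eq:nonhimpact}.

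I do not expect a genuine obstacle here: the whole argument is linear algebra carried out fibrewise over $q$. The only two points needing care are (i) the observation that the pre-impact velocity lies in $\Delta$, which is what allows $\pi_\Delta(\dot q)$ to be replaced by $\dot q$ — without it an extra term $-a_{ij}\,\omega^i(\dot q)\,W^j$ would survive; and (ii) keeping the index bookkeeping and the symmetry of $(a^{ij})$ straight when solving the Gram system for the coefficients $c_j$.
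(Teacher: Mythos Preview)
Your argument is correct and follows the same route as the paper: derive the general projection formula $\pi_\Delta(v)=v-a_{ij}\,\omega^i(v)\,W^j$, apply it to $P(q,\dot q)$, and use $\pi_\Delta(\dot q)=\dot q$ since the pre-impact velocity satisfies the constraints. The paper's own proof simply states the projection formula without justification, whereas you supply the Gram-matrix derivation explicitly, but the structure is identical.
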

\begin{proof}
	This follows from the fact that the orthogonal projection is given by
	\begin{equation}
	\pi_\Delta(\dot{q}) = \dot{q} - a_{ij}\omega^i(\dot{q})W^j,
	\end{equation}
	and that $\pi_\Delta(\dot{q}) = \dot{q}$ since the constraints are assumed to be satisfied before impact.
\end{proof}

\begin{remark}
	It is interesting to see the differences between plastic and elastic impacts. Elastic impacts result from solving all equations in \eqref{eq:var_impact} \textit{simultaneously}. On the other hand, plastic impacts result from first solving the two equations $\Delta \textbf{F}L = \alpha\cdot dh$ and $\Delta H = 0$. Once the multiplier $\alpha$ is found, then the $\lambda_k$ are found such that the constraints are satisfied.
\end{remark}
%%%%%%%%%%%%%%%%%%%%%%%%%%%%%%%%%%%%%%%%%%%%%%%%%%
\section{Example: The vertical rolling disk}\label{sec:penny}
The vertical rolling disk is a simple example for illustrating nonholonomic dynamics. We will work with the somewhat nonphysical case where the disk is not permitted to tilt.
The configuration space and local coordinates for the rolling disk are given by $q = (x,y,\theta,\varphi)\in Q = \mathbb{R}^2\times S^1\times S^1$, denoting the position of the contact point, the rotation angle of the disk, and the orientation of the disk, respectively. 
\subsection{Dynamic nonholonomic equations}
The Lagrangian for the vertical disk is taken to be the kinetic energy (no potential force will be included) i.e.
\begin{equation}
L = \frac{1}{2}m\left( \dot{x}^2+\dot{y}^2 \right) + \frac{1}{2}I\dot{\theta}^2 + \frac{1}{2}J\dot{\varphi}^2.
\end{equation}
Here, $m$ is the mass of the disk, $I$ is the moment of inertia of the disk about the axis perpendicular to the plane of the disk, and $J$ is the moment of inertia about an axis in the plane of the disk.

If $R >0$ is the radius of the disk, the nonholonomic constraints for rolling \textit{without} slipping are
\begin{equation}\begin{split}
\dot{x} &= R\dot{\theta}\cos\varphi, \\
\dot{y} &= R\dot{\theta}\sin\varphi.
\end{split}\end{equation}
This can be expressed as $\Delta = \ker\omega^1\cap\ker\omega^2$ where
\begin{equation}
\begin{split}
\omega^1 &= dx - R\left(\cos\varphi\right) d\theta,\\
\omega^2 &= dy - R\left(\sin\varphi\right) d\theta.
\end{split}
\end{equation}
The equations of motions for this (uncontrolled) system are given by two dynamic equations and the two constraining equations (see \S 1.4 of \cite{bloch2008nonholonomic} for the derivation).
\begin{equation}\label{eq:penny_motion}
\begin{split}
J\ddot{\varphi} &= 0, \\
\left(I+mR^2\right)\ddot{\theta} &= 0,\\
\dot{x} &= R\dot{\theta}\cos\varphi,\\
\dot{y} &= R\dot{\theta}\sin\varphi. 
\end{split}
\end{equation}
These equations can be easily integrated. Let the initial conditions be $(x_0,y_0,\theta_0,\varphi_0)$ and call $\omega = \dot{\varphi}$, $\Omega = \dot{\theta}$, which are constants. Then, the (continuous) equations of motion are
\begin{equation}
\begin{split}
\varphi &= \omega t + \varphi_0, \\
\theta &= \Omega t + \theta_0, \\
x &= \frac{\Omega}{\omega} R \sin\left(\omega t + \varphi_0\right) - \frac{\Omega}{\omega}R\sin\varphi_0 + x_0,\\
y &= -\frac{\Omega}{\omega} R \cos\left( \omega t + \varphi_0\right) + \frac{\Omega}{\omega}R\cos\varphi_0 + y_0.
\end{split}
\end{equation}
%%%%%%%%%%%%%%%%%%%%%%%%%%%%%%%%%%%%%%%%%%%%%%%%%%
\subsection{Impact map}
In this subsection, due to page constraints, we only derive the plastic nonholonomic impact map. The elastic nonholonomic impact map used in Section \ref{sec:numerical} comes from solving the system \eqref{eq:var_impact}.

Consider the case where the rolling disk is constrained to a pool table, so the disk can strike the edge of the table top and ``bounce'' off. Assume that the location of the wall around the table is given by $\tilde{S} = \left\{ (x,y)\in\mathbb{R}^2 : h(x,y) = 0 \right\}$ for some smooth function $h:\mathbb{R}^2\to\mathbb{R}$. Then an impact occurs when
\begin{equation}
\begin{bmatrix}
x + R\cos\varphi \\ y + R\sin\varphi
\end{bmatrix} \in \tilde{S}.
\end{equation}
Using \eqref{eq:impact} to determine the (pre-) impact map, we see that
\begin{equation}\label{eq:var_penny}
\begin{split}
\dot{x}^+ &= \dot{x}^- + \frac{C}{m}\frac{\partial h}{\partial x}, \\
\dot{y}^+ &= \dot{y}^- + \frac{C}{m}\frac{\partial h}{\partial y}, \\
\dot{\theta}^+ &= \dot{\theta}^-, \\
\dot{\varphi}^+ &= \dot{\varphi}^- + \frac{C}{J} R\left(\cos\varphi \frac{\partial h}{\partial y} - \sin\varphi \frac{\partial h}{\partial x} \right). 
\end{split}
\end{equation}
Here, the number $C=C(\varphi,\dot{x},\dot{y},\dot{\varphi})$ has the value
\begin{equation}
C = \frac{ -2\left[
	h_x\dot{x} + h_y \dot{y} 
	+ R\left( h_y \cos\varphi - h_x \sin\varphi \right) \dot{\varphi} \right]}
{ \frac{1}{m}\left[  h_x^2 + 
	h_y^2\right] + \frac{1}{J}R^2 
	\left( h_y \cos\varphi - h_x \sin\varphi \right)^2 },
\end{equation}
where $h_x$ and $h_y$ are the $x$ and $y$ partial derivatives of $h$. 
Notice that at impact the rotation angle of the disk, $\dot{\theta}$, is unchanged. This is because the constraint of no sliding has not yet been imposed. In order to apply the constraints, we compute $\pi_\Delta$. The corresponding vector fields, $W^i$, and multiplier matrix, $(a_{ij})$ are
\begin{equation}
\begin{split}
W^1 &= \frac{1}{m}\frac{\partial}{\partial x} - \frac{R}{I}\cos\varphi \frac{\partial}{\partial \theta}, \\
W^2 &= \frac{1}{m}\frac{\partial}{\partial y} - \frac{R}{I}\sin\varphi \frac{\partial}{\partial \theta},
\end{split}
\end{equation}
and
\begin{equation}\begin{split}
(a_{ij}) &= \begin{bmatrix}
m - K\cos^2\varphi & -K\sin\varphi\cos\varphi \\
-K\sin\varphi\cos\varphi & m-K\sin^2\varphi
\end{bmatrix},\\
K &= \frac{m^2R^2}{I+mR^2}.
\end{split}\end{equation}
We can now write $\pi_\Delta$ as a matrix with coordinates $(\dot{x},\dot{y},\dot{\theta},\dot{\varphi})$:

\begin{equation}\label{eq:pi_delta}
(I+mR^2)\cdot\pi_\Delta = \begin{bmatrix}
A & B \\ C & D
\end{bmatrix},
\end{equation}
with each block being
\begin{equation}
\begin{split}
A &= mR^2\begin{bmatrix}
\cos^2\varphi &\sin\varphi\cos\varphi \\
\sin\varphi\cos\varphi & \sin^2\varphi
\end{bmatrix}, \\
B &= IR\begin{bmatrix}
\cos\varphi & 0 \\
\sin\varphi & 0
\end{bmatrix}, \quad
C = mR\begin{bmatrix}
\cos\varphi & \sin\varphi \\
0 & 0
\end{bmatrix},\\
D &= \begin{bmatrix}
I & 0 \\ 0 & I + mR^2
\end{bmatrix}.
\end{split}
\end{equation}
The impact map is then given by composing \eqref{eq:pi_delta} with \eqref{eq:var_penny}.
There is one last thing to discuss before we move onto simulations of this model: what if the back end of the coin hits the surface instead of the front? i.e.
\begin{equation}
\begin{bmatrix}
x - R\cos\varphi \\ y - R\sin\varphi
\end{bmatrix} \in \tilde{S}.
\end{equation}
Then the same procedure as above is carried out. 
%%%%%%%%%%%%%%%%%%%%%%%%%%%%%%%%%%%%%%%%%%%%%%%%%%
\section{Numerical Results}\label{sec:numerical}
Assume that the table-top is elliptical. i.e.
\begin{equation}
h(x,y) = \frac{x^2}{a^2} + \frac{y^2}{b^2} - 1.
\end{equation}
We will also assume that the disk is homogeneous and thin so $I = 1/2mR^2$ and $J=1/4mR^2$. For the remaining parameters, we would like $R<<a,b$ so the coin has ample room to explore. The values are in the table below and are chosen to be similar to a US penny. 
\begin{table}[h!]
	\centering
	\begin{tabular}{ c | c }
		Parameter & Value \\ \hline
		$R$ & 0.01 m\\  
		$m$ & 0.0025 kg  \\
		$I$ & $1.25\cdot 10^{-7}$ kg m$^2$ \\
		$J$ & $6.25\cdot 10^{-8}$ kg m$^2$  \\
		$a$ & 0.15 m or 0.20 m\\
		$b$ & 0.20 m
	\end{tabular}
	\caption{The chosen set of parameters for the time step example. $a$ takes the first value for the elliptical cases and the latter for the circular ones.}
	\label{tab:penny_param}
\end{table}
%%%%%%%%%%%%%%%%%%%%%%%%%%%%%%%%%%%%%%%%%%%%%%%%%%%%%%
\subsection{Plastic vs Elastic}\label{subsec:pvse}
For the simulations, Figures \ref{fig:elastic_circle}, \ref{fig:plastic_circle}, \ref{fig:elastic_ellipse}, and \ref{fig:plastic_ellipse}, the same initial conditions are chosen: $x_0=y_0=\theta_0=0$, $\varphi_0 = \pi/2$, $\dot{\theta}_0 = 10$, and $\dot{\varphi}_0 = 0.2$.
\begin{figure}[h!]
	\centering
	\includegraphics[scale=0.37]{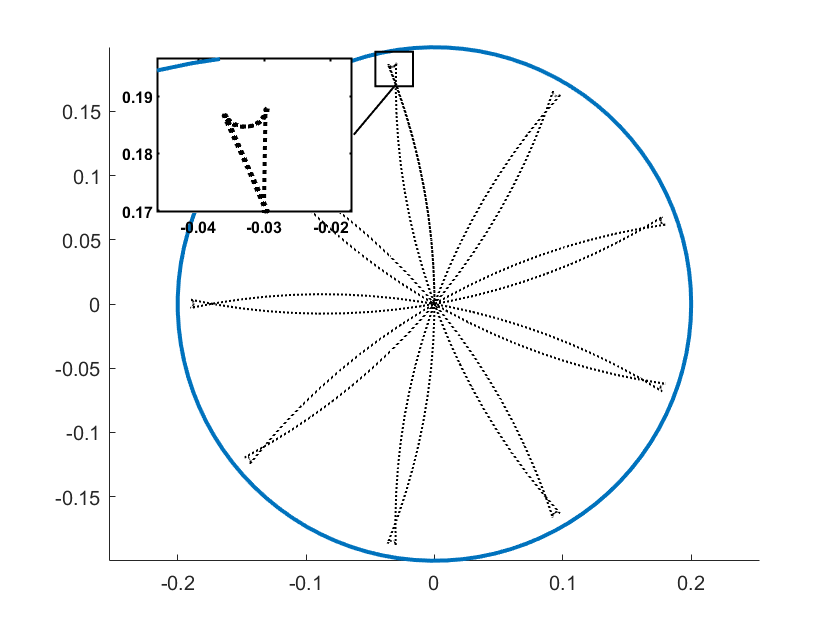}
	\caption{The first 20 impacts for the elastic impact on a circular table.}
	\label{fig:elastic_circle}
\end{figure}
\begin{figure}[h!]
	\centering
	\includegraphics[scale=0.37]{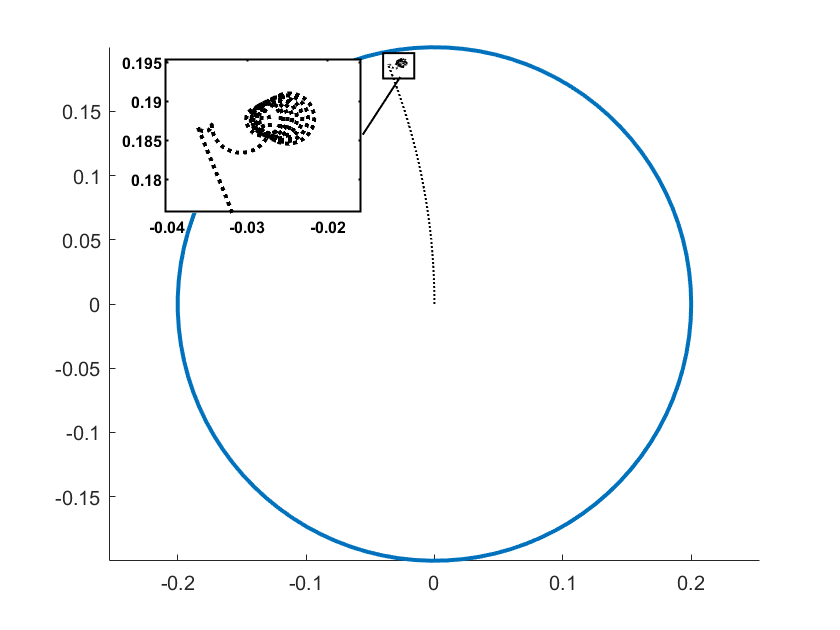}
	\caption{The first 20 impacts for the plastic impact on a circular table.}
	\label{fig:plastic_circle}
\end{figure}
\begin{figure}[h!]
	\centering
	\includegraphics[scale=0.37]{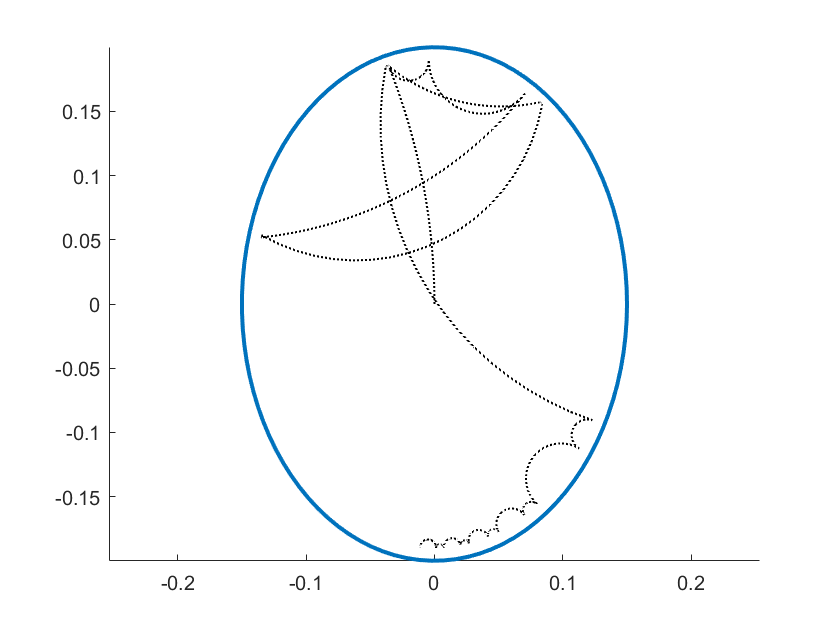}
	\caption{The first 20 impacts for the elastic impact on an elliptical table.}
	\label{fig:elastic_ellipse}
\end{figure}
\begin{figure}[h!]
	\centering
	\includegraphics[scale=0.37]{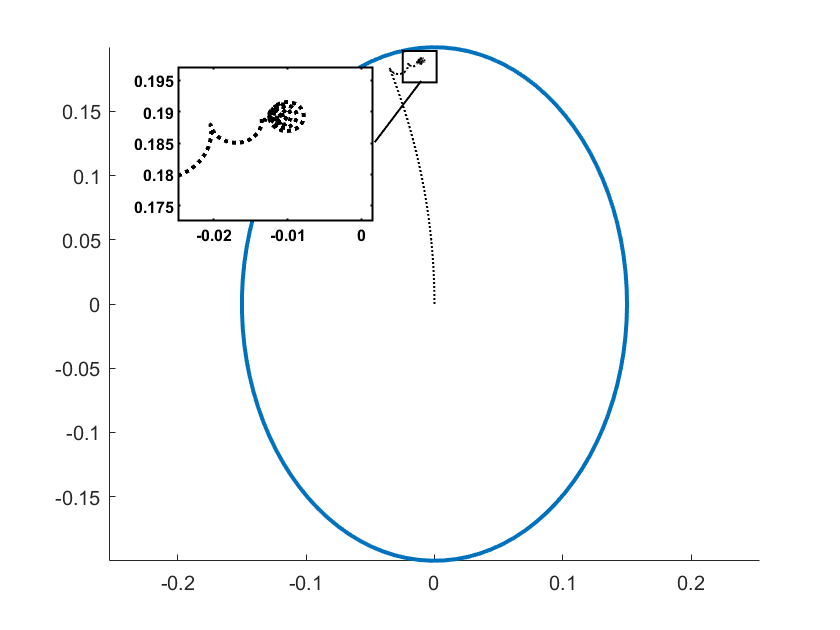}
	\caption{The first 20 impacts for the plastic impact on an elliptical table.}
	\label{fig:plastic_ellipse}
\end{figure}
%%%%%%%%%%%%%%%%%%%%%%%%%%%%%%%%%%%%%%%%%%%%%%%%%%%%%%%%%%%%
\subsection{Hints of Chaos}
In addition to comparing trajectories of the elastic and plastic impacts, we compare how changes in initial conditions propagate with time. All 100 initial conditions are taken to be those chosen in \S\ref{subsec:pvse} except that $\dot{\theta}_0$ and $\dot{\varphi}_0$ are randomly perturbed by $<0.005$. These results are shown in Figure \ref{fig:chaos}.
\begin{figure}
	\begin{subfigure}[t]{.48\columnwidth}
		\centering
		\includegraphics[width=\linewidth]{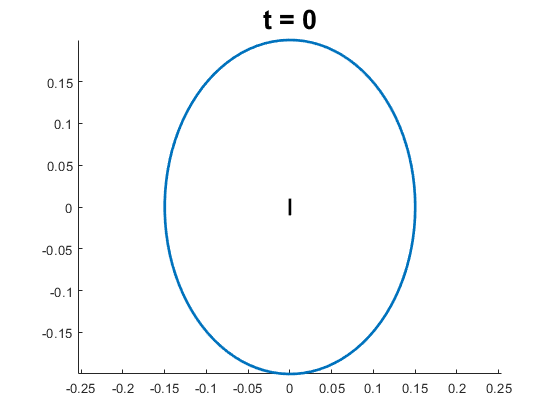}
	\end{subfigure}
	\hfill
	\begin{subfigure}[t]{.48\columnwidth}
		\centering
		\includegraphics[width=\linewidth]{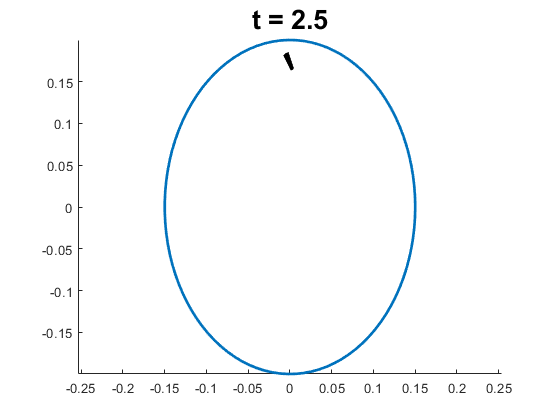}
	\end{subfigure}
	
	\medskip
	
	\begin{subfigure}[t]{.48\columnwidth}
		\centering
		\includegraphics[width=\linewidth]{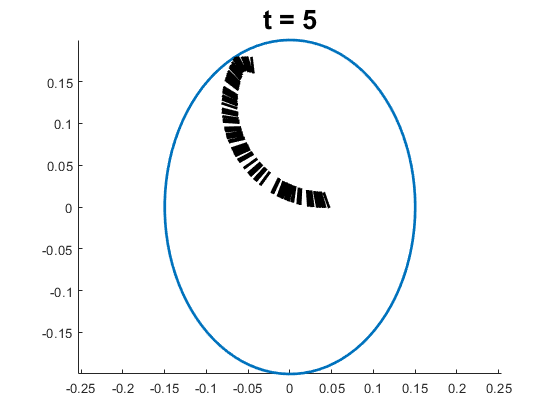}
	\end{subfigure}
	\hfill
	\begin{subfigure}[t]{.48\columnwidth}
		\centering
		\includegraphics[width=\linewidth]{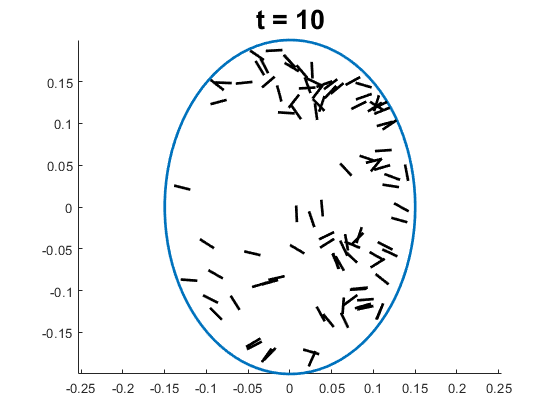}
	\end{subfigure}
	\caption{Plots of 100 different initial conditions at various times.}
	\label{fig:chaos}
\end{figure}
%%%%%%%%%%%%%%%%%%%%%%%%%%%%%%%%%%%%%%%%%%%%%%%%%%%%%%%%%%%%
\section{Conclusions, Control and Future Work}\label{sec:future}
There are two key future directions for this research. The first is controlling hybrid nonholonomic systems. 
All of the motions considered here have been uncontrolled. However, this can be changed by modifying \eqref{eq:penny_motion} to $J\ddot{\varphi} = u_1$ and $(I+mR^2)\ddot{\theta} = u_2$ where $u_1$ and $u_2$ are controls. It is an object of future work to understand controllability of the impacting penny (it is known that the non-hybrid penny is controllable). For example one may consider
how the reachable set from any point is related to the promixity and/or shape
of the boundary. Likewise, even though the penny is controllable, optimal control might \textit{require} impacts.

The other object of future study is to understand when and if nonholonomic billiards are chaotic / ergodic / measure preserving. This is not immediately clear because purely continuous nonholonomic systems may not preserve measure even though holonomic systems always do. However,  if the system is ergodic, then there is a plethora of dense orbits which leads to interesting control questions.
%%%%%%%%%%%%%%%%%%%%%%%%%%%%%%%%%%%%%%%%%%%%%%%%%%%%%%%%%%%%
\bibliographystyle{ieeetr}
\bibliography{references}

\begin{thebibliography}{10}

\bibitem{brogliato1999nonsmooth}
B.~Brogliato and B.~Brogliato, {\em Nonsmooth mechanics}.
\newblock Springer, 1999.

\bibitem{Baryshnikov}
Y.~Baryshnikov and V.~Zharnitsky, ``Billiards and nonholonomic distributions,''
  {\em Journal of Mathematical Sciences}, vol.~128, pp.~2706--2710, 07 2005.

\bibitem{doi:10.1119/1.1738428}
S.~Redner, ``A billiard-theoretic approach to elementary one-dimensional
  elastic collisions,'' {\em American Journal of Physics}, vol.~72, no.~12,
  pp.~1492--1498, 2004.

\bibitem{FERES20041541}
R.~Feres and G.~Yablonsky, ``Knudsen's cosine law and random billiards,'' {\em
  Chemical Engineering Science}, vol.~59, no.~7, pp.~1541 -- 1556, 2004.

\bibitem{artin1924mechanisches}
E.~Artin, ``Ein mechanisches system mit quasiergodischen bahnen,'' in {\em
  Abhandlungen aus dem Mathematischen Seminar der Universit{\"a}t Hamburg},
  vol.~3, pp.~170--175, Springer, 1924.

\bibitem{chernov2003introduction}
N.~Chernov and R.~Markarian, {\em Introduction to the Ergodic Theory of Chaotic
  Billiards}.
\newblock Publica{\c{c}}{\~o}es matem{\'a}ticas, Instituto nacional de
  matematica pura e aplicada, IMPA, 2003.

\bibitem{katok1995introduction}
A.~Katok and B.~Hasselblatt, {\em Introduction to the Modern Theory of
  Dynamical Systems}.
\newblock Encyclopedia of Mathematics and its Applications, Cambridge
  University Press, 1995.

\bibitem{SPAGNOLIE201733}
S.~E. Spagnolie, C.~Wahl, J.~Lukasik, and J.-L. Thiffeault, ``Microorganism
  billiards,'' {\em Physica D: Nonlinear Phenomena}, vol.~341, pp.~33 -- 44,
  2017.

\bibitem{coxbilliards}
C.~Cox and R.~Feres, ``Differential geometry of rigid bodies collisions and
  non-standard billiards,'' {\em Discrete and Continuous Dynamical Systems},
  vol.~36, pp.~6065--6099, 11 2016.

\bibitem{BROOMHEAD1993188}
D.~Broomhead and E.~Gutkin, ``The dynamics of billiards with no-slip
  collisions,'' {\em Physica D: Nonlinear Phenomena}, vol.~67, no.~1, pp.~188
  -- 197, 1993.

\bibitem{kirk1970optimal}
D.~E. Kirk, {\em Optimal control theory: an introduction}.
\newblock Springer, 1970.

\bibitem{teel}
R.~Goebel, R.~G. Sanfelice, and A.~R. Teel, {\em Hybrid Dynamical Systems:
  Modeling, Stability, and Robustness}.
\newblock Princeton University Press, 2012.

\bibitem{1656623}
A.~D. {Ames}, , R.~D. {Gregg}, and S.~{Sastry}, ``Is there life after zeno?
  taking executions past the breaking (zeno) point,'' in {\em 2006 American
  Control Conference}, pp.~6 pp.--, June 2006.

\bibitem{abraham2008foundations}
R.~Abraham and J.~Marsden, {\em Foundations of Mechanics}.
\newblock AMS Chelsea publishing, AMS Chelsea Pub./American Mathematical
  Society, 2008.

\bibitem{arnoldmechanics}
K.~Vogtmann, A.~Weinstein, and V.~Arnol'd, {\em Mathematical Methods of
  Classical Mechanics}.
\newblock Graduate Texts in Mathematics, Springer New York, 2013.

\bibitem{lamperski2008sufficient}
A.~Lamperski and A.~D. Ames, ``Sufficient conditions for zeno behavior in
  lagrangian hybrid systems,'' in {\em International Workshop on Hybrid
  Systems: Computation and Control}, pp.~622--625, Springer, 2008.

\bibitem{bloch2008nonholonomic}
A.~Bloch, J.~Baillieul, P.~Crouch, and J.~E. Marsden, {\em Nonholonomic
  mechanics and control}.
\newblock Interdisciplinary Applied Mathematics. Springer New York, 2008.

\bibitem{arnoldIII}
V.~Arnol'd, {\em Dynamical Systems III}.
\newblock Springer, 1988.

\bibitem{coleman1997motions}
M.~J. Coleman, A.~Chatterjee, and A.~Ruina, ``Motions of a rimless spoked
  wheel: a simple three-dimensional system with impacts,'' {\em Dynamics and
  stability of systems}, vol.~12, no.~3, pp.~139--159, 1997.

\bibitem{garcia1998simplest}
M.~Garcia, A.~Chatterjee, A.~Ruina, and M.~Coleman, ``The simplest walking
  model: stability, complexity, and scaling,'' {\em Journal of biomechanical
  engineering}, vol.~120, no.~2, pp.~281--288, 1998.

\end{thebibliography}
%\end{thebibliography}

\end{document}